\documentclass[a4paper, 11pt]{article}
\usepackage{latexsym,amssymb}
\usepackage[english]{babel}
\usepackage{bm}
\usepackage{amsfonts, amsthm}
\usepackage{amsmath}
\usepackage{mathrsfs}
\usepackage{multirow}
\usepackage{bm}
\usepackage{pgf,tikz}
\usepackage{rotating}
\usepackage{calculator}

\usepackage{xcolor}

\newcommand{\comment}[1]{}

\newcommand{\Keywords}[1]{\par\noindent{\bfseries Keywords}: #1}
\newcommand{\MSC}[1]{\par\noindent{\bfseries MSC}: #1}

\usepackage{amsthm}
\theoremstyle{plain}

\newtheorem*{Theorem*}{Theorem 1}

\newtheorem*{Lemma*}{Lemma}
\newtheorem{Definition}{Definition}[section]

\newtheorem{Theorem}[Definition]{Theorem}
\newtheorem{Proposition}[Definition]{Proposition}

\newtheorem{Lemma}[Definition]{Lemma}

\newtheorem{Corollary}[Definition]{Corollary}

\usepackage{dsfont}

\title{
Vertex-regular $1$-factorizations in infinite graphs
}
\author{Simone Costa\thanks{DII/DICATAM - Sez. Matematica, Universit\`a degli Studi di Brescia, Via
Branze 38, I-25123 Brescia, Italy. email: simone.costa@unibs.it}\ \
and Tommaso Traetta%
\thanks{
DICATAM - Sez. Matematica, Universit\`a degli Studi di Brescia,
Via Valotti 9, I-25123 Brescia, Italy. email: tommaso.traetta@unibs.it}}

\begin{document}
\maketitle
\begin{abstract}
The existence of $1$-factorizations of an infinite complete equipartite graph $K_m[n]$ (with $m$ parts of size $n$) admitting a vertex-regular automorphism group $G$ is known only when $n=1$ and $m$ is countable
(that is, for countable complete graphs) and, in addition, $G$ is a finitely generated abelian group $G$ of order $m$.

In this paper, we show that a vertex-regular $1$-factorization of $K_m[n]$ under the group $G$ exists if and only if $G$ has a subgroup $H$ of order $n$ whose index in $G$ is $m$.
Furthermore, we provide a sufficient condition for
an infinite Cayley graph to have a regular $1$-factorization. 
Finally, we construct 1-factorizations that contain a given subfactorization, both having 
a vertex-regular automorphism group.

\end{abstract}
\Keywords{Infinite Cayley Graph, Regular $1$-Factorization, Subfactorization}
\MSC{05C63, 05C70}

\section{Introduction}
In this paper, we deal with graphs, finite or infinite, which are simple and with no loops. Given a graph $\Lambda$, we denote by $V(\Lambda)$ the set of vertices,
and by $E(\Lambda)$ the set of edges of $\Lambda$, and refer to the cardinality
$|V(\Lambda)|$ of $V(\Lambda)$ as the order of $\Lambda$.
As usual, we use the notation $K_V$ for the complete graph whose vertex set is $V$.
Also, we denote by $K_m[n]$ the complete equipartite graph on $mn$ vertices partitioned into $m$ parts of size $n$; two vertices of $K_m[n]$ are adjacent if and only if they belong to distinct parts. Clearly $K_m[1]$ is isomorphic to
the complete graph $K_m$ of order $m$.

A graph is $r$-regular
if each of its vertices has $r$ edges incident with it.
A $1$-regular graph is then the vertex disjoint union of edges.
A subgraph $\Gamma$ of $\Lambda$ such that $V(\Gamma)=V(\Lambda)$ is called
a factor of $\Lambda$; equivalently, a factor of $\Lambda$ is a subgraph obtained by edge deletions only. A $1$-regular factor is simply called a $1$-factor.

A decomposition of $\Lambda$ is a set $\mathcal{G}=\{\Gamma_1,\ldots, \Gamma_n\}$ of subgraphs of $\Lambda$ whose edge-sets partition $E(\Lambda)$.
If each $\Gamma_i$ is a factor (resp. $1$-factor), we speak of a factorization (resp. $1$-factorization) of $\Lambda$.

An automorphism of $\Lambda$ is a bijection $\alpha$ of $V(\Lambda)$ such that
$\alpha(\Lambda)=\Lambda$, where $\alpha(\Lambda)$ is the graph obtained from $\Lambda$ by replacing each vertex,
say $x$, with $\alpha(x)$.
An automorphism of $\mathcal{G}$ is a bijection $\beta$ of $V(\Lambda)$ such that
$\beta(\Gamma)\in \mathcal{G}$ for every $\Gamma\in\mathcal{G}$.
It follows that $\beta(\Lambda)=\Lambda$, hence $\beta$ is necessarily an automorphism of the graph $\Lambda$.
If $\Lambda$ (resp. $\mathcal{G}$) has an automorphism group isomorphic to $G$ that acts sharply transitively on $V(\Lambda)$, we say that $\Lambda$ (resp. $\mathcal{G}$) is vertex-regular under $G$,
or simply $G$-regular.

Although regular $1$-factorizations have been widely studied for finite complete graphs \cite{BoLa, BoRi, Bonvi, Bonvi2, B01, HR, Ko, PaPe, Ri}, very little is known in the infinite case.
In \cite{BoMa10}, the authors construct a $G$-regular 1-factorization of a countable complete graph for every finitely generated abelian infinite group $G$.
A complementary result has been obtained in \cite{Costa20}
where it is shown that there exists a $G$-regular $1$-factorization of the complete graph $K_G$ for every infinite group $G$ -- not necessarily countable -- with no involutions (i.e., elements of order $2$). It is worth pointing out that \cite{Costa20} provides a much more general result
concerning the existence of a $G$-regular factorization of complete graphs of every infinite order.
As far as we know, there is no other paper dealing with vertex-regular 1-factorizations of infinite graphs.

In this paper, we completely characterize the existence of $G$-regular $1$-factorizations of the complete equipartite graph $K_m[n]$,
for every infinite group $G$. More precisely, we prove the following.
\begin{Theorem}\label{main}
Let $G$ be an infinite group. There exists a $G$-regular $1$-factorization of
$K_m[n]$ if and only if $G$ has a subgroup $H$ of size $n$ whose index in $G$ is $m$.
\end{Theorem}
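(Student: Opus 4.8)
The plan is to translate the whole statement into the language of Cayley graphs and to read off the part-structure from a group-invariant partition. Since a $G$-regular $1$-factorization of $K_m[n]$ comes equipped with a sharply transitive action of $G$ on the vertex set, I would first identify $V(K_m[n])$ with $G$ so that $G$ acts by left translation $x\mapsto gx$.

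For the forward (necessity) direction: every element of $G$ permutes the $1$-factors, hence is a graph automorphism of $K_m[n]$; and for $n\ge 2$ every automorphism of $K_m[n]$ preserves the partition into parts, since the parts are exactly the connected components of the complement (a disjoint union of $m$ copies of $K_n$). Thus the partition of $G$ into parts is invariant under left translation. A left-translation-invariant partition of $G$ is precisely the partition into the left cosets of a subgroup: the block through the identity is a subgroup $H$, and the remaining blocks are its cosets. Reading off block size and number of blocks yields $|H|=n$ and $[G:H]=m$. The case $n=1$ is the complete graph, where the partition degenerates to singletons and $H=\{e\}$ works trivially.

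For the converse (sufficiency), given $H\le G$ with $|H|=n$ and $[G:H]=m$, I would realize $K_m[n]$ as the Cayley graph $\mathrm{Cay}(G,G\setminus H)$: vertices $x,y$ are adjacent iff $x^{-1}y\notin H$, equivalently $xH\ne yH$, so the parts are the left cosets of $H$ and $G\setminus H$ is a symmetric connection set avoiding the identity. The task becomes to partition the edge set into perfect matchings permuted by left translation. The edge-orbits under $G$ correspond to the pairs $\{a,a^{-1}\}$ with $a\in G\setminus H$: the orbit $O_a=\{\{x,xa\}:x\in G\}$ is a $G$-invariant spanning subgraph which is a perfect matching when $a$ is an involution, and otherwise a $2$-regular graph (a disjoint union of cycles of length $|a|$, or of bi-infinite paths when $a$ has infinite order). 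Each involution thus contributes one $G$-invariant $1$-factor for free.

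The hard part will be decomposing the non-involution orbits into $1$-factors compatibly with the $G$-action. A single $O_a$ splits into two perfect matchings exactly when all its cycles are even (i.e. $a$ has even or infinite order), and even then one must choose the two matchings coherently across the various $\langle a\rangle$-cosets so that $G$ merely permutes, rather than destroys, the pair, since an alternating colouring of each cycle is not intrinsically translation-invariant. The genuine obstruction is an element $a\in G\setminus H$ of finite odd order, whose orbit is a union of odd cycles admitting no decomposition into two $1$-factors at all. To overcome this I would not argue orbit-by-orbit but exploit that $G$ is infinite, so that every vertex has degree $|G|$: I would combine several edge-orbits into larger $G$-invariant subgraphs of even or infinite regularity and decompose those, or, more robustly, build the $1$-factorization by a transfinite recursion that fixes the colours of an entire $G$-orbit of edges at each step, using the infinite available degree to keep extending a proper edge-colouring whose classes are spanning matchings. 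Isolating this as a clean sufficient condition for regular $1$-factorizations of infinite Cayley graphs is the step I expect to be the main obstacle; I would then check that $\mathrm{Cay}(G,G\setminus H)$ meets its hypotheses, apply it, and finally assemble the involution-factors together with the factors from the non-involution orbits into a single $1$-factorization, verifying directly that left translations permute its members.
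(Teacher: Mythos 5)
Your necessity argument is correct and amounts to the same thing the paper does: once the vertex set is identified with $G$, the part containing the identity is closed under differences and is therefore a subgroup $H$ with $|H|=n$ and $|G:H|=m$ (the paper routes this through Corollary \ref{Sabidussi:2} and the complement $G\setminus S$, but the content is identical, including the degenerate case $n=1$).

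The sufficiency direction, however, has a genuine gap, and you have in fact flagged it yourself: the ``clean sufficient condition for regular $1$-factorizations of infinite Cayley graphs'' that you defer is not a technicality but the entire substance of this half of the theorem, and your sketch omits the one group-theoretic fact that makes any such construction possible. The paper's mechanism (Propositions \ref{basefactor} and \ref{noinvolutions}) is to build by transfinite induction a \emph{single} $1$-factor $\Gamma$ of $K_G$ whose difference list is exactly $S\setminus I(G)$ with every difference of multiplicity one, and then take its $G$-translates; this automatically mixes the edge-orbits $O_a$ and sidesteps the odd-cycle obstruction you correctly identify. But a spanning $1$-factor on $|G|$ vertices has a difference list of size $|G|$, so this starter can exist only if $|S\setminus I(G)|$ equals $|G|$ (or is empty, in which case Proposition \ref{involutions} finishes). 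For $S=G\setminus H$ you must therefore prove that $G\setminus(H\cup I(G))$ is either empty or of full cardinality $|G|$, and this is Theorem \ref{Gruppi} of the paper --- a nontrivial statement whose proof, in the case $|H|=|G|$, requires showing that if this set were small then every nontrivial coset of $H$ would consist entirely of involutions, $H$ would be abelian, and the set would in fact be empty. Your proposal neither states nor proves this dichotomy, and your alternative suggestion of colouring one $G$-orbit of edges at a time cannot work as stated, since a single non-involution orbit of odd finite order admits no decomposition into matchings at all; so the combination of orbits is forced, and controlling it is exactly where the missing cardinality lemma is consumed.
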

As a consequence, we obtain the existence of a $G$-regular $1$-factorization of
$K_G$ for every infinite group $G$.

More generally, if $\Lambda$ has a $G$-regular $1$-factorization, then $\Lambda$ is $G$-regular itself.
In other words, $\Lambda$ is necessarily a Cayley graph on $G$, and in Section 2 we recall some basic notions and well-known results on Cayley graphs and vertex-regular decompositions.

In Section 3 we provide a sufficient condition for an infinite Cayley graph on $G$ to have
a $G$-regular $1$-factorization: Theorem \ref{main1}.
Complete (equipartite) graphs are Cayley graphs and in Section 4
we essentially prove that they satisfy the assumption of Theorem \ref{main1}, thus proving Theorem \ref{main}. Finally, in Section 5, we construct vertex-regular 1-factorizations with given regular subfactorizations.

\section{Preliminary notions}
In this section, we recall some known facts on Cayley graphs and graph decompositions with a regular automorphism group on the vertex set. Throughout the paper, we denote groups in additive notation, even though they are not necessarily abelian.

Given a group $G$, a subset $S$ of $G\setminus \{0\}$ such that $S=-S$ is called a connection set.
The Cayley graph on $G$ with connection set $S$ is the simple graph
$Cay[G:S]$ having $G$ as vertex set and such that two vertices $x$ and $y$ are adjacent if and only if $x-y\in S$.
Note that, if $S=G\setminus \{0\}$ then $Cay[G:S]$ is the complete graph whose vertex set is $G$. More generally, if $H$ is a subgroup of $G$ of index $m$ and order $n$, then $Cay[G:G\setminus H]$ is isomorphic to $K_m[n]$; indeed,
two vertices are adjacent if and only if they belong to distinct right cosets of $H$, which therefore represent the $m$ parts of $Cay[G:S]$, each of size $n$.

Given a graph $\Gamma$ with vertices in $G$, the right translate of $\Gamma$ by an element $g\in G$ is the graph $\Gamma+g$ obtained from $\Gamma$ by replacing each of its vertices, say $x$, with $x+g$.
If $\Gamma$ is a Cayley graph on $G$, then $\Gamma + g = \Gamma$. Hence, the group of right translations of $\Gamma$ (which we recall to be isomorphic to $G$) is a vertex-regular automorphism group of $\Gamma$.
The following theorem shows that the graphs with a vertex-regular automorphism group are exactly the Cayley graphs.

\begin{Theorem}[\cite{Sab}]\label{Sabidussi}
A graph $\Lambda$ is $G$-regular if and only if
$\Lambda$ is isomorphic to a Cayley graph on $G$.
\end{Theorem}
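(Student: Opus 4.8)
The plan is to prove the two implications separately, noting that the backward direction has essentially been established in the discussion preceding the statement. Indeed, if $\Lambda$ is isomorphic to $Cay[G:S]$, then the right translations $x\mapsto x+g$ ($g\in G$) are automorphisms of $Cay[G:S]$ forming a group isomorphic to $G$, and this group acts sharply transitively on the vertex set $G$, since for any two vertices $x,y$ the unique translation carrying $x$ to $y$ is the one by $-x+y$. Transporting this action along the given isomorphism $\Lambda\cong Cay[G:S]$ exhibits $\Lambda$ as $G$-regular.

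For the forward direction, suppose $A\le\mathrm{Aut}(\Lambda)$ is isomorphic to $G$ and acts sharply transitively on $V(\Lambda)$. First I would fix a base vertex $v_0\in V(\Lambda)$. Sharp transitivity guarantees that for each $v\in V(\Lambda)$ there is a unique $a_v\in A$ with $a_v(v_0)=v$, so the assignment $v\mapsto a_v$ is a bijection from $V(\Lambda)$ onto $A$; composing with the isomorphism $A\cong G$ yields a bijection $\phi$ from $V(\Lambda)$ onto $G$. I would then define the candidate connection set as $S=\{a\in A: a(v_0)\text{ is adjacent to }v_0\}$, viewed inside $G$ through $A\cong G$.

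Two verifications are then needed. First, $S$ is a genuine connection set: the identity lies outside $S$ because $\Lambda$ has no loops, and $S$ is symmetric because $\Lambda$ is undirected; applying the automorphism $a^{-1}$ to the adjacency $a(v_0)\sim v_0$ gives $v_0\sim a^{-1}(v_0)$, so $a\in S$ if and only if $-a\in S$. Second, $\phi$ is a graph isomorphism onto the Cayley graph: for vertices $u=a_u(v_0)$ and $v=a_v(v_0)$, applying the automorphism $a_v^{-1}$ shows that $u$ and $v$ are adjacent in $\Lambda$ if and only if $(a_v^{-1}a_u)(v_0)\sim v_0$, that is, if and only if $a_v^{-1}a_u\in S$, which is precisely the adjacency condition determined by $S$.

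The one point demanding care is the matching of conventions, and this is the only genuine obstacle, the rest being a direct transcription of sharp transitivity into the language of connection sets. The computation above produces a \emph{left} difference $a_v^{-1}a_u$, whereas the paper's Cayley graph uses the difference $x-y$; since $G$ need not be abelian these differ. I would reconcile them by replacing $\phi$ with the bijection $v\mapsto a_v^{-1}$: using the symmetry $S=-S$, the relation $a_v^{-1}a_u\in S$ is equivalent to $a_u^{-1}a_v\in S$, which is exactly the adjacency $\phi(u)-\phi(v)\in S$ in $Cay[G:S]$. Thus this inversion, together with the isomorphism $A\cong G$, delivers an isomorphism of $\Lambda$ onto $Cay[G:S]$ and completes the proof.
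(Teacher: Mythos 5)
Your argument is correct and complete. Note that the paper itself offers no proof of this statement: it is Sabidussi's classical theorem, quoted with a citation to \cite{Sab}, and only the easy backward direction (right translations of a Cayley graph form a sharply transitive automorphism group) is sketched in the surrounding discussion. Your forward direction is the standard argument -- fix a base vertex, use sharp transitivity to identify $V(\Lambda)$ with the group, and read off the connection set from the neighbours of the base vertex -- and you correctly identify and resolve the only delicate point, namely that the naive labelling $v\mapsto a_v$ produces the ``left'' difference $a_v^{-1}a_u$ while the paper's convention $x-y\in S$ requires the opposite one; passing to $v\mapsto a_v^{-1}$ and using $S=-S$ fixes this cleanly.
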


Recalling that an automorphism of a decomposition of $\Lambda$ is also an automorphism of the graph $\Lambda$,
we have the following corollary.

\begin{Corollary}\label{Sabidussi:2}
If there exists a $G$-regular decomposition of $\Lambda$, then $\Lambda$ is isomorphic to a Cayley graph on $G$.
\end{Corollary}

Therefore, we focus on regular $1$-factorizations of Cayley graphs and recall two efficient methods to construct them.

Let $\Lambda = Cay[G : S]$, let $\{S_1,\ldots, S_n\}$ be a partition of $S$ into connection sets (i.e. $S_i=-S_i$)
and set $\Gamma_i=Cay[G:S_i]$, for every $i=1,\ldots,n$. Clearly, $\mathcal{G} = \{\Gamma_i\mid i=1,\ldots,n\}$ is
a decomposition of $\Lambda$. Also, each $\Gamma_i$ is a factor of $\Lambda$ and
it is fixed by the right translations induced by the elements of $G$. Therefore,
$\mathcal{G}$ is a $G$-regular factorization of $\Lambda$. If $S=\{s_1, \ldots, s_n\}$ contains only involutions and
$S_i=\{s_i\}$ for every $i=1,\ldots,n$, then each $\Gamma_i$ is a $1$-factor of $\Lambda$ and
$\mathcal{G}$ is a $G$-regular $1$-factorization of $\Lambda$. Denoting by $I(G)$ the set of all involutions of $G$, we then have the following.

\begin{Proposition}\label{involutions}
$Cay[G:S]$ has a $G$-regular $1$-factorization whenever $S\subseteq I(G)$.
\end{Proposition}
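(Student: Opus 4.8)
The plan is to invoke the general construction described immediately before the statement, applied to the finest possible partition of $S$. The key observation is that, since every $s\in S$ is an involution, we have $s=-s$, so each singleton $\{s\}$ already satisfies $\{s\}=-\{s\}$ and is therefore a connection set in its own right. Consequently $\{\,\{s\} : s\in S\,\}$ is a partition of $S$ into connection sets, and I would set $\Gamma_s=Cay[G:\{s\}]$ for each $s\in S$.

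Next I would verify that each $\Gamma_s$ is a $1$-factor of $\Lambda$. By definition of the Cayley graph, two vertices $x,y$ are adjacent in $\Gamma_s$ exactly when $x-y=s$; since $s=-s$, this is equivalent to $y-x=s$, so the unique neighbour of $x$ is $x+s=x-s$. Because $s\neq 0$ (as $S\subseteq G\setminus\{0\}$) we have $x+s\neq x$, and because $s$ is an involution the map $x\mapsto x+s$ is a fixed-point-free involution of $G$. Hence the edges $\{x,x+s\}$ form a perfect matching, so $\Gamma_s$ is $1$-regular and spans $V(\Lambda)=G$; that is, it is a $1$-factor.

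Since the singleton connection sets partition $S$, the edge-sets of the $\Gamma_s$ partition $E(\Lambda)$, so $\mathcal{G}=\{\Gamma_s : s\in S\}$ is a $1$-factorization of $\Lambda$. Finally, each $\Gamma_s$ is a Cayley graph on $G$ and is therefore fixed by every right translation, i.e. $\Gamma_s+g=\Gamma_s$ for all $g\in G$; thus the group of right translations, which is isomorphic to $G$ and acts sharply transitively on $V(\Lambda)$, permutes the members of $\mathcal{G}$ (indeed fixes each of them) and so is an automorphism group of $\mathcal{G}$. This makes $\mathcal{G}$ a $G$-regular $1$-factorization, as required.

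There is essentially no hard step here: the substance was already prepared in the paragraph preceding the statement, and the proposition is the special case in which the connection set consists entirely of involutions. The only points requiring a moment's care are checking that a singleton $\{s\}$ is a legitimate connection set, which is precisely where the hypothesis $s=-s$ enters, and confirming that the resulting factor is a genuine fixed-point-free perfect matching rather than something containing a loop, which follows from $s\neq 0$.
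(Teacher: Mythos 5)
Your proof is correct and follows exactly the paper's own argument: the paragraph preceding the proposition already carries out this construction, partitioning $S$ into the singleton connection sets $\{s\}$ (legitimate because each $s$ is an involution) and observing that each $Cay[G:\{s\}]$ is a $1$-factor fixed by all right translations. Your additional verification that each $\Gamma_s$ is a fixed-point-free perfect matching is a welcome explicit check, but the route is the same.
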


Another way of constructing $G$-regular $1$-factorizations relies on the concept of difference family.
Given a graph $\Gamma$ with vertices in $G$, the list of differences of $\Gamma$ is the multiset
$\Delta(\Gamma)$ of the differences $x-y$ and $y-x$ between every two adjacent vertices $x$ and $y$ of $\Gamma$.

\begin{Proposition}\label{basefactor}
Let $\Gamma$ be a $1$-factor of $K_G$. If each element of $\Delta\Gamma$ has multiplicity 1,
then $\{\Gamma+g\mid g\in G\}$ is a $G$-regular $1$-factorization of $Cay[G:\Delta\Gamma]$.
\end{Proposition}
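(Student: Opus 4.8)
The plan is to exhibit the group of right translations of $G$ as the desired vertex-regular automorphism group, and to use the multiplicity-$1$ hypothesis precisely to guarantee that the translates of $\Gamma$ tile the edge set of $Cay[G:\Delta\Gamma]$ without overlap.

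First I would record that $S:=\Delta\Gamma$ is a genuine connection set: it is symmetric by construction, since each edge $\{x,y\}$ contributes both $x-y$ and $y-x$, and it avoids $0$ because $\Gamma$ has no loops. Moreover, the hypothesis that every element of $\Delta\Gamma$ has multiplicity $1$ says exactly that the map sending an ordered pair of adjacent vertices $(x,y)$ to the difference $x-y$ is injective, so $S$ has no repeated elements and $Cay[G:S]$ is well defined. Next, for each $g\in G$ let $\rho_g\colon x\mapsto x+g$ be the corresponding right translation. As recalled in Section 2, right translation preserves differences, $(x+g)-(y+g)=x+(-y)=x-y$ — the one point where non-commutativity must be tracked carefully — so each $\rho_g$ is an automorphism of $Cay[G:S]$. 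Since $\Gamma$ is a $1$-factor of $K_G$ and every edge-difference of $\Gamma$ lies in $S$, each $\rho_g(\Gamma)=\Gamma+g$ is a spanning $1$-regular subgraph whose edges all belong to $Cay[G:S]$, that is, a $1$-factor of $Cay[G:S]$.

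The heart of the argument is to show that every edge of $Cay[G:S]$ lies in exactly one translate $\Gamma+g$. For existence, given an edge $\{u,v\}$ we have $u-v\in S$, so by the injectivity above there is a unique edge $\{x,y\}$ of $\Gamma$ with $x-y=u-v$; setting $g=-x+u$ forces $x+g=u$, and difference-preservation then automatically yields $y+g=v$, so $\{u,v\}\in E(\Gamma+g)$. For uniqueness, suppose $\{u,v\}$ lies in both $\Gamma+g$ and $\Gamma+g'$. Reading off, at the vertex $u$, its unique neighbour $v$ in each translate, I obtain edges $\{a,b\},\{a',b'\}$ of $\Gamma$ with $u=a+g,\ v=b+g$ and $u=a'+g',\ v=b'+g'$, whence $a-b=u-v=a'-b'$; multiplicity $1$ forces $(a,b)=(a',b')$ and therefore $g=-a+u=g'$. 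This simultaneously shows that the translates are pairwise distinct and that their edge sets partition $E(Cay[G:S])$, so $\{\Gamma+g\mid g\in G\}$ is a $1$-factorization of $Cay[G:S]$.

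Finally, I would verify $G$-regularity: associativity gives $\rho_g(\Gamma+h)=\Gamma+(h+g)$, so each $\rho_g$ permutes the members of the factorization, and the right translations act sharply transitively on $V=G$; hence the factorization is $G$-regular. The only genuinely delicate step is the uniqueness claim, which is exactly where the multiplicity-$1$ hypothesis is indispensable — without it a difference shared by two edges of $\Gamma$ would place some edge of the Cayley graph into two distinct translates — and where the left-versus-right conventions for differences in a possibly non-abelian $G$ must be handled with care.
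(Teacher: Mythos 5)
Your proof is correct: the paper states Proposition \ref{basefactor} without proof, treating it as a standard fact of the difference method, and your argument --- right translations preserve differences, the multiplicity-one hypothesis gives both existence and uniqueness of the translate containing any given edge, and the translations act sharply transitively on $G$ --- is precisely the justification the authors implicitly rely on. You also track the non-abelian bookkeeping correctly (e.g.\ $(x+g)-(y+g)=x-y$ and $g=-a+u$), which is the only place such an argument could silently go wrong.
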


Note that if $\Delta \Gamma$ contains an involution, then it must appear with even multiplicity.

\section{1-Factorizations of infinite Cayley graphs}
In this section, we provide a sufficient condition for a Cayley graph on the group $G$ to have a $G$-regular $1$-factorization. More precisely, we prove the following.
We recall that $I(G)$ denotes the set of all involutions of $G$.

\begin{Theorem}\label{main1}
Let $G$ be an infinite group and let $\Gamma=Cay[G:S]$.
If $|S\setminus I(G)|$ is either $0$ or $|G|$, then there exists a $G$-regular $1$-factorization of $Cay[G:S]$.
\end{Theorem}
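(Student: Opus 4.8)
The plan is to split the statement into the two cases allowed by the hypothesis. If $|S\setminus I(G)|=0$, then $S\subseteq I(G)$ and Proposition~\ref{involutions} immediately yields a $G$-regular $1$-factorization of $Cay[G:S]$, so nothing remains to prove. From now on I assume $|S\setminus I(G)|=|G|=:\kappa$, which is infinite. I write $S=S_0\sqcup S_1$ with $S_0=S\cap I(G)$ and $S_1=S\setminus I(G)$, so that $Cay[G:S]$ is the edge-disjoint union of $Cay[G:S_0]$ and $Cay[G:S_1]$. The involution part is handled exactly as in Proposition~\ref{involutions}: the singletons $\{s\}$ with $s\in S_0$ give $G$-invariant $1$-factors whose union is $Cay[G:S_0]$. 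Hence everything reduces to producing a $G$-regular $1$-factorization of $Cay[G:S_1]$ and taking the union of the two families; since right translations fix each involution factor and permute the factors of the second family, the resulting factorization is $G$-regular.

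For the non-involution part my strategy is to build a single base $1$-factor and invoke Proposition~\ref{basefactor}. Since no element of $S_1$ is an involution, I may fix a transversal $T$ of the pairs $\{s,-s\}$, so that $S_1=T\sqcup(-T)$ and $|T|=\kappa$. The key step is the following claim: there is a $1$-factor $B$ of $K_G$ (a perfect matching of the vertex set $G$) whose list of differences satisfies $\Delta B=S_1$ with every element of multiplicity $1$. Granting this, Proposition~\ref{basefactor} shows that $\{B+g\mid g\in G\}$ is a $G$-regular $1$-factorization of $Cay[G:\Delta B]=Cay[G:S_1]$, which combined with the involution factors completes the proof.

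To prove the claim I would construct $B$ by transfinite recursion of length $\kappa$. Fix enumerations $T=\{t_\alpha\mid\alpha<\kappa\}$ and $G=\{g_\alpha\mid\alpha<\kappa\}$, and build an increasing chain of partial matchings $M_\alpha$ together with the sets of already-used differences and already-matched vertices. At stage $\alpha$ I impose two demands. First, if the difference $t_\alpha$ has not yet been used, I add an edge $\{x,x+t_\alpha\}$ realizing it; a suitable $x$ exists because the vertices that are forbidden (those already matched, or whose $t_\alpha$-translate is matched) form a set of size $<\kappa$, whereas $G$ has $\kappa$ elements. Second, if the vertex $g_\alpha$ is still unmatched, I pair it with $g_\alpha+d$ for some $d\in S_1$ that is not yet used and for which $g_\alpha+d$ is unmatched; again only $<\kappa$ choices of $d$ are forbidden, so such a $d$ exists. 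Each stage adds at most two edges, so at every stage $\alpha<\kappa$ fewer than $\kappa$ vertices have been matched; this keeps every forbidden set of size $<\kappa$ and makes each step feasible.

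At the end of the recursion every $t_\alpha$, hence every element of $T\cup(-T)=S_1$, has been used exactly once, and every $g_\alpha$ has been matched, so $B=\bigcup_{\alpha<\kappa}M_\alpha$ is a perfect matching of $G$ with $\Delta B=S_1$ of multiplicity $1$. I expect the bookkeeping of the two simultaneous demands to be the main obstacle: one must verify that using differences ``early'' to cover a vertex never conflicts with the later requirement of realizing each difference exactly once, which is precisely why the demands are processed together and why only unused differences are ever introduced. Here the hypothesis that $S_1$ contains no involutions is essential: it guarantees $d\neq-d$ for every difference used, so that each edge contributes two distinct differences and the multiplicity-$1$ condition of Proposition~\ref{basefactor} is genuinely met.
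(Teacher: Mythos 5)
Your proposal is correct and follows essentially the same route as the paper: split off the involutions via Proposition~\ref{involutions}, then build by transfinite recursion a single base $1$-factor whose difference list is exactly $S\setminus I(G)$ with multiplicity one, and develop it under $G$ via Proposition~\ref{basefactor}. The only cosmetic difference is bookkeeping: the paper runs one induction over a well-ordering of $G$ with small sections (handling ``cover vertex $g$'' and ``realize difference $g$'' at step $g$), while you enumerate vertices and a transversal of $\{s,-s\}$ separately and satisfy both demands at each ordinal stage; the cardinality arguments are identical.
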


Considering that $Cay[G:S\cap I(G)]$ has a $G$-regular $1$-factorization $\mathcal{G}_1$ by Proposition \ref{involutions},
it is enough to prove that $Cay[G:S\setminus I(G)]$ has a $G$-regular $1$-factorization $\mathcal{G}_2$. Indeed,
one can easily check that $\mathcal{G}_1\,\cup\,\mathcal{G}_2$ is a $G$-regular $1$-factorization
of $\Gamma=Cay[G:S]$.
Our strategy is to construct, by transfinite induction, a $1$-factor $\Gamma$ such that
$\Delta\Gamma = S\setminus I(G)$ whenever $|S\setminus I(G)|= |G|$ (Proposition \ref{noinvolutions}), and then apply Proposition \ref{basefactor}.

We first introduce some set-theoretical notions.
We will work within the Zermelo-Frankel axiomatic system with the Axiom of Choice in the form of the Well-Ordering Theorem. We recall the definition of a well-order.
\begin{Definition}
A well-order $\prec$ on a set $X$ is a total order on $X$ with the property that every nonempty subset of $X$ has a least element.
\end{Definition}

In particular, the following theorem is equivalent to the Axiom of Choice.

\begin{Theorem}[Well-Ordering]
Every set $X$ admits a well-order $\prec$.
\end{Theorem}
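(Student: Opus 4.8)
Since the authors adopt the Axiom of Choice precisely in its well-ordering form, within their framework this statement functions as an axiom and strictly speaking needs no proof; the word ``theorem'' is justified by the classical equivalence with the Axiom of Choice. Accordingly, the plan is to sketch the nontrivial direction of that equivalence, namely Zermelo's derivation of a well-order on an arbitrary set $X$ from the choice-function formulation of AC. (The converse is immediate: a well-order on $X$ assigns to each nonempty subset its $\prec$-least element, which is exactly a choice function.) I would route the argument through Zorn's Lemma, itself a standard consequence of AC.

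First I would consider the collection $\mathcal{W}$ of all pairs $(A,\prec_A)$ with $A\subseteq X$ and $\prec_A$ a well-order on $A$, partially ordered by setting $(A,\prec_A)\preceq(B,\prec_B)$ whenever $A$ is a $\prec_B$-initial segment of $B$ and $\prec_B$ extends $\prec_A$. The routine part is checking that $\preceq$ is a genuine partial order; the substantive part is verifying that every chain $\mathcal{C}\subseteq\mathcal{W}$ admits an upper bound, obtained by taking the union $A^{\ast}$ of the underlying sets $A$ together with the union of the corresponding orders.

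The main obstacle is exactly this chain-bound step: one must show the union of a family of pairwise-compatible well-orders is again a well-order. Given a nonempty $S\subseteq A^{\ast}$, I would fix some $(A,\prec_A)\in\mathcal{C}$ meeting $S$ and argue that the $\prec_A$-least element of $S\cap A$ is in fact least in all of $S$, the initial-segment condition preventing any strictly smaller element from appearing in a larger member of the chain. Zorn's Lemma then yields a maximal $(M,\prec_M)\in\mathcal{W}$, and a short argument forces $M=X$: were some $x\in X\setminus M$ available, appending it as a new largest element would produce a strictly greater element of $\mathcal{W}$, contradicting maximality. Hence $\prec_M$ well-orders $X$, as required.
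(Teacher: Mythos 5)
Your reading matches the paper exactly: the authors state the Well-Ordering Theorem without proof, explicitly adopting it as their form of the Axiom of Choice, so no argument is required or given in the text. Your supplementary Zorn's Lemma derivation (the poset of well-ordered subsets under initial-segment extension, the chain-union check, and the maximality argument forcing $M=X$) is the standard correct proof and contains no gaps.
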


Given an element $x\in X$, we define the section $X_{\prec x}$ associated to it
$$X_{\prec x}=\{y\in X: y\prec x\}.$$

\begin{Corollary}[see \cite{Costa20}]\label{goodgoodorder}
Every set $X$ admits a well-order $\prec$ such that the cardinality of any section is smaller than $|X|$.
\end{Corollary}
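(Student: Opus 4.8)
The plan is to reduce the statement to a basic fact about initial ordinals. By the Well-Ordering Theorem, $X$ is equinumerous with some ordinal, and among all ordinals equinumerous with $X$ there is a least one, the \emph{initial ordinal} (equivalently, the cardinal) $\kappa = |X|$. I would fix a bijection $f : \kappa \to X$ and use it to transport the canonical well-order of the ordinal $\kappa$ to $X$: declare $x \prec y$ precisely when $f^{-1}(x) < f^{-1}(y)$ in $\kappa$. Since $<$ well-orders $\kappa$, the relation $\prec$ is a well-order on $X$.

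It remains to control the sections. For $x \in X$, writing $\beta = f^{-1}(x) < \kappa$, the section $X_{\prec x}$ is exactly the image $f[\{\gamma : \gamma < \beta\}] = f[\beta]$, so $|X_{\prec x}| = |\beta|$. Since every ordinal satisfies $|\beta| \le \beta$, and $\beta < \kappa$, we obtain $|\beta| \le \beta < \kappa = |X|$, whence $|X_{\prec x}| < |X|$, as required.

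The crux --- and the only place where genuine care is needed --- is the appeal to the \emph{initiality} of $\kappa$: it is essential that $X$ be well-ordered with order type equal to the \emph{least} ordinal of its cardinality, rather than an arbitrary one. If instead one transported the order from some larger ordinal $\alpha > \kappa$ with $|\alpha| = |X|$, then the element sitting at position $\kappa$ would have a section of order type $\kappa$, hence of cardinality $|X|$, violating the conclusion. Choosing $\kappa$ itself rules this out, because the inequality $|\beta| \le \beta$ forces every proper initial segment of $\kappa$ to have strictly smaller cardinality. All the remaining steps --- the transport of the well-order and the identification of each section with an initial segment of $\kappa$ --- are routine.
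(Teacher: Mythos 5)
Your proof is correct and is the standard argument: transport the canonical order of the initial ordinal $\kappa=|X|$ to $X$, so that every section is an initial segment $f[\beta]$ with $|\beta|\le\beta<\kappa$, and you rightly isolate the initiality of $\kappa$ as the essential point. The paper itself gives no proof, deferring to \cite{Costa20}, where the same construction is used.
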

We recall now that well-orderings allow proofs by induction.

\begin{Theorem}[Transfinite induction]\label{transfinite}
Let $X$ be a set with a well-order $\prec$ and let $P_x$ denote a property for each
$x\in X$. Set $0=\min X$ and assume that:
\begin{itemize}
\item $P_0$ is true, and
\item for every $x\in X$, if $P_y$ holds for every $y\in X_{\prec x}$, then $P_x$ holds.
\end{itemize}
Then $P_x$ is true for every $x\in X$.
\end{Theorem}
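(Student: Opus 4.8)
The plan is to argue by contradiction, exploiting the defining property of a well-order: every nonempty subset of $X$ has a least element. Suppose, for contradiction, that the conclusion fails, so that the set $F=\{x\in X: P_x \text{ is false}\}$ of counterexamples is nonempty. Since $\prec$ is a well-order, $F$ admits a least element, say $x_0=\min F$. The goal is then to derive a contradiction by showing that $P_{x_0}$ must in fact hold.

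The key step is to observe that no element of the section $X_{\prec x_0}$ lies in $F$: if $y\prec x_0$, then $y$ cannot belong to $F$, for otherwise $y$ would be an element of $F$ strictly smaller than its minimum $x_0$. Hence $P_y$ is true for every $y\in X_{\prec x_0}$. Applying the second hypothesis with $x=x_0$, it follows that $P_{x_0}$ holds, contradicting $x_0\in F$. Therefore $F=\emptyset$, which is precisely the assertion that $P_x$ is true for every $x\in X$.

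It is worth noting where the base case enters. If $x_0=0=\min X$, then the section $X_{\prec 0}$ is empty, so the premise that $P_y$ holds for every $y\in X_{\prec 0}$ is vacuously satisfied, and the inductive step alone already forces $P_0$; in fact the first hypothesis $P_0$ is subsumed by the second. The argument above covers this case uniformly, so no separate treatment of $x_0=0$ is required.

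There is no serious obstacle here, as the result is classical; the only point demanding care is the correct use of the minimal-counterexample step, namely that minimality of $x_0$ is what guarantees $P_y$ for all $y\prec x_0$, and that the inductive hypothesis is then invoked at the single element $x_0$ rather than quantified prematurely. Since the whole argument rests only on the least-element property of $\prec$, it is valid for any well-ordered $X$ irrespective of cardinality, which is exactly the generality needed for the transfinite constructions carried out later in the paper.
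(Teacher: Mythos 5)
Your proof is correct: the minimal-counterexample argument via the least-element property of the well-order is the standard and complete proof of transfinite induction, and your observation that the base case is subsumed by the inductive step (since $X_{\prec 0}=\emptyset$ makes the premise vacuous) is also accurate. The paper states this theorem without proof, treating it as a classical fact, so there is no argument in the paper to compare against; your write-up supplies exactly the expected reasoning.
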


We are now ready to build a $G$-regular $1$-factorization of $Cay[G:U]$ whenever $U$ has the same cardinality of $G$ and contains no involutions.

\begin{Proposition}\label{noinvolutions}
Let $\Gamma=Cay[G:U]$ where $G$ is an infinite group.
If $U\,\cap\, I(G) = \emptyset$ and $|U|=G$, then $\Gamma$ has a $G$-regular $1$-factorization.
\end{Proposition}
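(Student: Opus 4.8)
The goal, by Proposition~\ref{basefactor}, is to construct a single $1$-factor $\Gamma$ of $K_G$ whose difference list $\Delta\Gamma$ equals $U$ with every element appearing exactly once; since $U=-U$ and $U$ contains no involutions, $U$ splits into $|U|=|G|$ pairs $\{u,-u\}$, and each edge $\{x,y\}$ of $\Gamma$ contributes exactly one such pair $\{x-y,y-x\}$. Thus I must build a perfect matching on $G$ (a partition of $G$ into pairs) such that the induced differences hit each pair $\{u,-u\}$ exactly once. The plan is to do this by transfinite induction (Theorem~\ref{transfinite}), building the matching one edge at a time while simultaneously covering one required difference pair at each step.

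\medskip

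First I would fix, via Corollary~\ref{goodgoodorder}, a well-order $\prec$ on a suitable index set so that every proper section has cardinality strictly less than $|G|$. The cleanest bookkeeping uses a well-order in which I track two things in parallel: the differences of $U$ that still need to be realized, and the vertices of $G$ that are not yet matched. Concretely, I would well-order $G$ (or a transversal of the pairs in $U$) and, at stage $x$, maintain a partial matching $M_x$ consisting of the edges chosen so far. The induction hypothesis $P_x$ should assert that $M_x$ is a partial matching whose difference list contains each pair of $U$ at most once, that $|M_x|<|G|$, and that all differences indexed by the section $U_{\prec x}$ have already been realized.

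\medskip

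At a successor-type stage I would process the next still-uncovered difference pair $\{u,-u\}$: I need to select two vertices $x,y\in G$, both currently unmatched, with $x-y=u$, so that adding the edge $\{x,y\}$ realizes $u$. The constraint is that $x$ and $y$ must avoid the (fewer than $|G|$ many) already-matched vertices and that the new edge must not duplicate a difference already present — but since I am realizing $u$ for the first time and each edge contributes only the pair $\{u,-u\}$, the latter is automatic. To find such $x,y$: the set of pairs $(x,x-u)$ ranges over all of $G$, and the number of forbidden choices (those meeting the set of already-matched vertices) has cardinality less than $|G|$ by the induction hypothesis; since $G$ is infinite and $|G|$ is its own cardinal of cofinal-enough size, there remain $|G|$ admissible choices, so I pick the $\prec$-least valid edge. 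Simultaneously I must eventually match every vertex of $G$, so I interleave: at stages indexed to vertices, if the $\prec$-least unmatched vertex $v$ is still unmatched, I pair it with some unmatched $w$ whose difference $v-w$ is a still-uncovered element of $U$ — again possible because fewer than $|G|$ differences are used and fewer than $|G|$ vertices are occupied.

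\medskip

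The main obstacle is the \emph{simultaneous} satisfaction of the two closure conditions — covering every difference in $U$ and matching every vertex of $G$ — without one sabotaging the other. Realizing a difference forces a specific relationship $x-y=u$ between the two endpoints, whereas matching a leftover vertex $v$ requires finding a partner $w$ with $v-w\in U$ still free; the delicate point is that whenever I am forced to saturate a vertex $v$, the difference $v-w$ I am compelled to use might already be covered, threatening the multiplicity-one requirement. The resolution is a counting argument at each stage: because the section has cardinality $<|G|$, both the set of matched vertices and the set of used differences are small, so for the vertex $v$ there remain $|G|$ candidate partners $w=v-u'$ with $u'\in U$ uncovered and $w$ unmatched, and I pick the $\prec$-least. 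I would present this as a single transfinite recursion over a well-ordered set that lists, in alternation, the difference-pairs of $U$ and the vertices of $G$, verifying at each step that the pool of admissible edges is nonempty (indeed of full cardinality $|G|$). The limit stages are handled by taking unions, and the hypothesis $|M_x|<|G|$ is preserved since a union of fewer than $|G|$ sets each of size $<|G|$ has size $<|G|$ when $|G|$ is a regular cardinal — or, using Corollary~\ref{goodgoodorder}, since every section stays small by construction. The final matching $\Gamma=\bigcup_x M_x$ is then a $1$-factor of $K_G$ with $\Delta\Gamma=U$, and Proposition~\ref{basefactor} finishes the proof.
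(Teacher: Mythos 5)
Your proposal is correct and follows essentially the same route as the paper: a transfinite recursion (with a well-order whose sections have cardinality less than $|G|$) that builds a partial matching one edge at a time, at each stage either realizing a not-yet-covered difference of $U$ or saturating the least unmatched vertex, with the same counting argument showing the pool of admissible edges is nonempty, and Proposition~\ref{basefactor} finishing the job. The only cosmetic difference is that the paper indexes the recursion by the elements $g$ of $G$ themselves, handling both tasks at stage $g$ (saturate the vertex $g$, and realize the difference $g$ when $g\in U$), rather than interleaving two separate lists.
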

\begin{proof} We endow $G$ with a well-ordering $\prec$ that satisfies the property of Corollary \ref{goodgoodorder} and such that $0=\min_{\prec} G$. To prove the assertion, it is enough to build, by transfinite induction, an ascending chain of $1$-regular graphs $\Gamma_g$, $g\in G$, (i.e. $\Gamma_h$ is a subgraph of $\Gamma_g$ whenever $h\prec g$) each of which satisfies the following conditions:
\begin{itemize}
\item[$1_g$)] $\Gamma_g$ is either finite or $|V(\Gamma_g)|\leq|G_{\prec g}|$;
\item[$2_g$)] $g\in V(\Gamma_g)$, and $g\in \Delta\Gamma_g$ whenever $g\in U$;
\item[$3_g$)] $\Delta\Gamma_g\subset U$.
\end{itemize}
Indeed, one can easily see that $\Gamma:=\bigcup_{g\in G} \Gamma_g$ is a $1$-factor of $K_G$ such that
$\Delta\Gamma=U$. The result then follows from Proposition \ref{basefactor}.\\

\noindent
BASE CASE. Take $z\in U$ and let $\Gamma_0$ be the edge $\{0,z\}$. This graph clearly satisfies properties $1_0,2_0$ and $3_0$: indeed, $\Gamma_0$ is finite, $0\in V(\Gamma_0)$ and $\Delta \Gamma_0= \{\pm z\}\subset U$.\\

\noindent
INDUCTIVE STEP. Assume that there exists a graph $\Gamma_h$ that satisfies properties $1_h, 2_h$ and $3_h$ for every $h\prec g$, and set $\Gamma_{\prec g}:= \bigcup_{h\prec g} \Gamma_h$. Due to properties $1_h, 2_h$ and $3_h$, $\Gamma_{\prec g}$ is a $1$-regular graph such that:
\begin{itemize}
\item[$1$)] $\Gamma_{\prec g}$ is either finite or $|V(\Gamma_{\prec g})|\leq|G_{\prec g}|$;
\item[$2$)] for every $h\prec g$, $h\in V(\Gamma_{\prec g})$, and $h\in \Delta\Gamma_{\prec g}$
whenever $h \in U$;
\item[$3$)] $\Delta \Gamma_{\prec g}\subset U$.
\end{itemize}
Note that $|\Delta\Gamma_{\prec g}|=|V(\Gamma_{\prec g})|\leq|G_{\prec g}|$, and
$|G_{\prec g}|<|G|$ by Corollary \ref{goodgoodorder}. Therefore, letting
$H = V(\Gamma_{\prec g})\, \cup\, (\Delta\Gamma_{\prec g}+g)$, we have that $|H|<|G|$. Since by assumption $|U|= |G|$, then $|U+g|= |G|$, hence $(U+g) \setminus H$ is nonempty.

Take $z\in (U+g) \setminus H$. Clearly, $z\not \in V(\Gamma_{\prec g})$ and $z-g \in U\setminus \Delta \Gamma_{\prec g}$.
If $g\in V(\Gamma_{\prec g})$, we set $\Gamma'=\Gamma_{\prec g}$, otherwise $\Gamma'$ is obtained by adding to $\Gamma_{\prec g}$ the edge $\{g,z\}$.

Now, if $g\not \in U$ or $g\in \Delta(\Gamma')$, we set $\Gamma_g=\Gamma'$. Otherwise, due to property $1)$ of $\Gamma_{\prec g}$, the set $H' = V(\Gamma')\,\cup\, (-g+V(\Gamma'))$ has cardinality smaller than $|G|$, hence $G\setminus H'$ is nonempty. Then we can take $y\in G\setminus H'$. Clearly, both $y$ and $g+y$ do not belong to $V(\Gamma')$, therefore $\Gamma_g$ is obtained by adding to $\Gamma'$
the edge $\{y,g+y\}$.
\end{proof}

\section{1-Factorizations of complete (equipartite) infinite graphs}
In this section, we prove the main result of this paper, Theorem \ref{main}. We recall that the complete equipartite graph
$K_m[n]$ is isomorphic to the Cayley graph $Cay[G:G\setminus H]$ where $G$ is any group of order $mn$ and $H$ is any subgroup of $G$ of order $n$ whose index in $G$ is $m$.
Because of Theorem \ref{main1}, it is enough to show that $G\setminus (H\,\cup\, I(G))$ has the same cardinality as $G$. This is the content of Theorem \ref{Gruppi} whose proof relies on elementary group theory.

For the reader's convenience, we recall some basic results concerning groups and refer to the
\cite{Ma12} for the standard notions and definitions.

Let $x$ and $g$ be elements of a group $G$. Then $x^g = -g + x + g$ is called the conjugate of $x$ by
$g$, and the set $x^G = \{x^g: g\in G\}$ of all conjugates of $x$ is called the $G$-orbit of $x$.
Note that $x$ and $x^g$ have the same order; also, $x$ and $g$ commute if and only if $x=x^g$.
We recall that the centralizer of $x$ is the subgroup $C(x)$ of $G$
consisting of all group elements that commute with $g$, that is,
\[C(x):=\{g: x^g=x\}.\]
The number $|G:H|$ of right (left) cosets of the subgroup $H$ in $G$ is called the index of $H$ in $G$.
It is very well known that
\[
|G:C(x)| = |x^G|.
\]
Considering that $G$ is the union of all right (resp. left) cosets of $H$, we also have that
\[
|G| = |G:H||H|.
\]
%
We recall that $I(G)$ denotes the set of all elements of $G$ of order 2, also called involutions.

\begin{Lemma}\label{cardinality}
If $G$ is an infinite group and $G\setminus (I(G)\,\cup\,\{0\})$ is nonempty, then
$|G\setminus (I(G)\,\cup\,\{0\})|= |G|$.
\end{Lemma}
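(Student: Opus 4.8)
The plan is to argue by contradiction. Write $A=G\setminus(I(G)\cup\{0\})$ for the set of elements of order at least $3$; equivalently $A=\{x\in G: 2x\neq 0\}$, while $B:=I(G)\cup\{0\}=\{x\in G:2x=0\}$, so that $G=A\sqcup B$. Since $A$ is nonempty by hypothesis, I fix once and for all an element $a\in A$, and I suppose toward a contradiction that $|A|<|G|$; the goal is then to exhibit $|G|$ pairwise distinct elements of $A$.

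The first step is to locate a large subgroup in which $a$ is central. Because conjugation preserves the order of an element, every conjugate of $a$ again has order at least $3$, so $a^G\subseteq A$ and hence $|a^G|\le|A|<|G|$. Using the recalled identities $|G:C(a)|=|a^G|$ and $|G|=|G:C(a)|\,|C(a)|$, together with the fact that for cardinals $\kappa,\lambda$ with $\kappa\lambda$ infinite one has $\kappa\lambda=\max(\kappa,\lambda)$, the strict inequality $|G:C(a)|<|G|$ forces $|C(a)|=|G|$.

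The second step exploits that $A$ is small inside $C(a)$ as well. Splitting $C(a)=(A\cap C(a))\sqcup(B\cap C(a))$ and noting $|A\cap C(a)|\le|A|<|G|=|C(a)|$, infinite cardinal arithmetic gives $|B\cap C(a)|=|G|$. Now I invoke the defining property of the centralizer: every $x\in C(a)$ commutes with $a$, so for $x\in B\cap C(a)$ one computes $2(a+x)=2a+2x=2a\neq 0$, whence $a+x\in A$. The map $x\mapsto a+x$ is injective, being a left translation, so it embeds $B\cap C(a)$ into $A$; this yields $|A|\ge|B\cap C(a)|=|G|$, contradicting $|A|<|G|$. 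The contradiction shows $|A|=|G|$, as claimed.

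The only genuinely delicate point is the passage to $C(a)$: the order of a product $a+x$ can be controlled cheaply only when $a$ and $x$ commute, since then $2(a+x)=2a$ and the hypothesis $a\in A$ makes this nonzero. This is exactly why the whole argument is carried out inside the centralizer, and the conjugacy-class bound of the first step is what guarantees that $C(a)$, and therefore $B\cap C(a)$, is as large as $G$. Everything else reduces to the two standard index identities recalled before the statement and the rule $\kappa\lambda=\max(\kappa,\lambda)$ for infinite cardinals.
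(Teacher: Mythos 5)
Your proof is correct and follows essentially the same route as the paper's: both arguments fix a non-involution $a$, use that $a^G\subseteq A$ together with $|G:C(a)|=|a^G|$ and $|G|=|G:C(a)|\,|C(a)|$, and exploit the key observation that $j\mapsto a+j$ injects the involutions of $C(a)$ into $A$. The only difference is organizational: the paper concludes that both $|C(a)|$ and $|G:C(a)|$ are smaller than $|G|$ and contradicts the index formula directly, whereas you first deduce $|C(a)|=|G|$ and then contradict $|A|<|G|$ via the injection -- the same ingredients in a slightly different order.
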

\begin{proof}
Suppose that $U = G\setminus (I(G)\,\cup\,\{0\})$ is non\-empty and let
$x\in U$. We assume for a contradiction that $|U|<|G|$.

Set $J = C(x) \,\cap\, I(G)$. Since $x\in U$, then $x+j\in U$ for every $j\in J$
(otherwise, $x+j\in I(G)$ for some $j\in J$, hence $0 = 2(x+j) = 2x + 2j = 2x$,
contradicting the assumption that $x$ is not an involution). In other words,
$x + J \subseteq U$, hence $|J| = |x + J| \leq|U| < |G|$.
Since $C(g) = J \,\cup\,(C(g)\,\cap\,U)$, we have that
\begin{equation}\label{cardinality1}
|C(g)| < |G|.
\end{equation}
Since the conjugacy preserves the order of an element, we have that
$x^G\subseteq U$, hence
\begin{equation}\label{cardinality2}
|G : C(g)| = |x^G| < |G|.
\end{equation}
By conditions \ref{cardinality1} and \ref{cardinality2}, we obtain the following contradiction: $|G|=|G : C(g)|\cdot |C(g)| <|G|^2 = |G|$. Therefore, $|U|=|G|$.
\end{proof}

We can now prove the following result.

\begin{Theorem}\label{Gruppi} Let $G$ be an infinite group and let $H$ be a subgroup of $G$. If $U=G\setminus (I(G)\,\cup\, H)$ is nonempty, then $|U|= |G|$.
\end{Theorem}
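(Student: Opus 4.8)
The plan is to reduce to the case $|H|=|G|$ and then to produce $|G|$ non-involutions lying outside $H$ inside a single left coset of $H$.

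First I would dispose of the easy case. Note that $U$ is exactly the set of non-involutions of $G$ avoiding $H$, i.e. $U=W\setminus H$ where $W=G\setminus(I(G)\cup\{0\})$. Since $U\neq\emptyset$ we have $W\neq\emptyset$, so $|W|=|G|$ by Lemma \ref{cardinality}. As $|W\cap H|\le|H|$, if $|H|<|G|$ then removing fewer than $|G|$ elements from $W$ leaves $|U|=|W|=|G|$, and we are done. Thus it remains to treat $|H|=|G|$. Fix $x\in U$. The coset $x+H$ is disjoint from $H$ (because $x\notin H$) and has cardinality $|H|=|G|$, and every non-involution of $x+H$ lies in $U$; hence it suffices to show that $x+H$ contains $|G|$ non-involutions. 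If it contains no involution this is immediate, so assume it contains one, say $s$, and note $x+H=s+H$. A short computation shows that for $k\in H$ the element $s+k$ is an involution exactly when $k^{s}=-k$; writing $K=\{k\in H: k^{s}=-k\}$, the map $k\mapsto s+k$ then carries $K$ bijectively onto the involutions of $x+H$ and $H\setminus K$ onto its non-involutions. Since $x$ is a non-involution, $x=s+k_x$ with $k_x\in H\setminus K$, so the goal becomes $|H\setminus K|=|G|$.

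The heart of the argument is to rule out that $K$ is co-small, i.e. that $|H\setminus K|<|G|$; I would argue by contradiction, exploiting the distinguished element $k_x$. For $h\in H$ set $S_h=\{k\in K: h+k\in K\}$. Since $k\mapsto h+k$ is injective and sends at most $|H\setminus K|$ elements of $K$ outside $K$, each $S_h$ is again co-small in $H$. Expanding the condition $h+k\in K$ through $k^{s}=-k$ yields $h^{s}=(-h)^{k}$ for every $k\in S_h$. Applying this with $h=k_x$: any $k,k'\in S_{k_x}$ give $(-k_x)^{k}=(-k_x)^{k'}$, whence $k-k'\in C(k_x)$; fixing $k_0\in S_{k_x}$ this shows $S_{k_x}-k_0\subseteq C(k_x)\cap H$. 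On the other hand no $k\in S_{k_x}$ can centralize $k_x$, for otherwise $(-k_x)^{k}=-k_x$ would force $k_x^{s}=-k_x$, i.e. $k_x\in K$, a contradiction; hence $C(k_x)\cap H$ is disjoint from $S_{k_x}$. Now $S_{k_x}-k_0$ is a translate of a co-small set, so it has cardinality $|G|$, yet it is contained in $H\setminus S_{k_x}$, a set of cardinality $<|G|$ — impossible. This contradiction yields $|H\setminus K|=|G|$, so $x+H$ has $|G|$ non-involutions, all in $U$, and therefore $|U|=|G|$.

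I expect this last step to be the main obstacle. A priori the coset $x+H$ could consist almost entirely of involutions (as the reflection coset does in the infinite dihedral group), and since $H$ need not be normal one cannot simply pass to the quotient $G/H$ to locate a non-involution coset. The translate-versus-centralizer contradiction above is precisely what forces the non-involutions of $x+H$ to be as numerous as $H$ itself, and everything else in the proof is routine cardinal bookkeeping built on Lemma \ref{cardinality}.
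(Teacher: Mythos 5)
Your proof is correct, and I verified the key computations: $s+k\in I(G)$ iff $k^s=-k$; the identity $h^s=(-h)^k$ for $k\in S_h$; the inclusion $S_{k_x}-k_0\subseteq C(k_x)\cap H$ together with $S_{k_x}\cap C(k_x)=\emptyset$; and the cardinal estimates making $S_{k_x}$ of full size $|G|$ while $H\setminus S_{k_x}$ is small. The setup coincides with the paper's up to a point --- both dispose of $|H|<|G|$ via Lemma \ref{cardinality}, and both exploit the equivalence between ``$s+k$ is an involution'' and ``$k^s=-k$'' --- but the mechanism for the contradiction is genuinely different. The paper works globally: assuming $|U|<|H|$, every right coset of $H$ contains an involution $j$, each $H_j=\{h\in H:h^j=-h\}$ has small complement and generates $H$, a second co-smallness argument shows the elements of $H_j$ commute, hence $H$ is abelian and conjugation by $j$ is inversion on all of $H$, so every nontrivial coset consists of involutions and $U=\emptyset$. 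You work locally in the single coset $x+H$ containing the given non-involution $x=s+k_x$, and you apply the key identity with $h=k_x\notin K$ rather than with $h\in K$: this turns the large set $S_{k_x}$ into a subset of a single right coset of $C(k_x)$ that is simultaneously disjoint from $C(k_x)$, forcing $|G|$ elements of $C(k_x)\cap H$ into the small set $H\setminus S_{k_x}$. Your route is shorter and bypasses the commutativity of $H$ entirely; the paper's route is longer but extracts structural information along the way (when the non-involutions outside $H$ are few, $H$ is abelian and inverted by every outside involution, which is exactly the generalized-dihedral picture your infinite-dihedral remark alludes to). Both are valid proofs of the theorem.
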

\begin{proof} Let $U=G\setminus (I(G)\,\cup\, H)$. By Lemma \ref{cardinality},
$G\setminus(I(G)\,\cup\,\{0\})$ is empty or it has the same cardinality as $G$.
Hence, if $|H|<|G|$, then $|U| = 0$ or $|G|$.

It is left to consider the case $|H| = |G|$.
We assume that $|U|<|H|$ and show that $U$ is necessarily empty.
Note that every right coset of $H$ must contain some involution (otherwise, $U$ would contain a right coset of $H$, which has the same cardinality as $|H|$).
Therefore, denoting by $J = I(G)\,\cap\,(G\setminus H)$ the set of all involutions of $G\setminus H$,
we have that
\begin{equation}\label{cosets}
\text{each right coset of $H$, except for $H$, is of the form $H+j$, with $j\in J$.}
\end{equation}
For each involution $j\in J$, let $H_j\subseteq H$ be the set defined as follows:
\begin{equation}\label{k^i=-k}
\text{$h\in H_j$ if and only if $2(h + j)= 0$.}
\end{equation}
In other words,
\begin{equation}\label{k^i=-k:2}
\text{$h\in H_j$ if and only if $h^j = -h$.}
\end{equation}
Note that $\langle H_j \rangle = H$, where $\langle H_j \rangle$
is the group generated by $H_j$, for every $j\in J$.
Indeed, by \eqref{k^i=-k} and recalling that $|U|< |H|$, we have
\[|H\setminus H_j| = |(H\setminus H_j)+j| = |(H+j)\,\cap\,U|<|H|,\]
hence $|H_j| = |H|$ and $\langle H_j \rangle = H$.

We now show that $H$ is abelian.
Let $j\in J$, $h\in H_j$, and set $U^* = H_j\, \cap\, (U -j - h)$ and
$H_{j,h}=H_j \setminus U^*$.
Clearly, $|U^*|\leq|U| < |H|$,
hence $|H_{j,h}|=|H|$ and
\[\langle H_{j,h}\rangle = H.\]
Also, if $x\in H_{j,h}$, then $(x+h)+j\not\in U\, \cup\, H$, hence $(x+h)+j\in I(G)$.
By \eqref{k^i=-k}, $x+h\in H_j$.
By \eqref{k^i=-k:2}, for every $x\in H_{j,h}\subseteq H_j$ we have that:
\[ -h -x = (x + h)^j = x^j + h^j = -x -h,
\]
that is, $h+x = x+h$.
Then, all the elements of $\langle H_{j,h}\rangle = H$ commute with every $h\in H_j$. This means that the elements of $H_j$ commute with each other, and since they generate $H$, we have that
$H$ is abelian.

Since $H=\langle H_i \rangle$, for every $h\in H$ and $j\in J$ we have that $h^j = -h$, hence
$H=H_j$, and by \eqref{k^i=-k} we have that $H+j$ contains only involutions. Then by \eqref{cosets},
all right cosets of $H$, except for $H$, contain no element of order greater than 2, that is,
$U$ is empty.
\end{proof}

We are now ready to prove the main result of this paper, whose statement is recalled in the following.\\

\noindent
\textbf{Theorem \ref{main}.}
\emph{
Let $G$ be an infinite group. There exists a $G$-regular $1$-factorization of
$K_m[n]$ if and only if $G$ has a subgroup $H$ of size $n$ whose index in $G$ is $m$.
}
\begin{proof}
Let $G$ be an infinite group and let $H$ be a subgroup of $G$ of size $n$ whose index in $G$ is $m$.
By Theorem \ref{Gruppi}, the set $G\setminus (I(G)\cup H)$ is either empty or has the same size as $G$. Therefore, by applying Theorem \ref{main1} with $S=G\setminus H$, we obtain the existence of a $G$-regular $1$-factorization of $Cay[G:S]$. Clearly, $Cay[G:S]$ is isomorphic to $K_m[n]$.

Conversely, assume there is a $G$-regular $1$-factorization $\mathcal{G}$ of $K_m[n]$.
By Corollary \ref{Sabidussi:2}, $K_m[n]$ is isomorphic to $Cay[G:S]$ for some connection set $S$ of $G$.
Considering that $Cay[G:S]$ contains no edge of the form $\{0,h\}$ for every $h\in H=G\setminus S$,
it follows that $H$ represents a part (of size $n$) of the equipartite complete graph $Cay[G:S]$.
We are going to prove that $H$ is a subgroup of $G$. If $x,y\in H$ and $y-x\in S$,
then $\{x,y\}$ would be an edge of $Cay[G:S]$, contradicting the fact that $H$ is a part of $Cay[G:S]$.
Therefore, $y-x\in H$, for every $x,y\in H$, that is, $H$ is a subgroup of $G$.
\end{proof}

By taking $n=1$ in Theorem \ref{main}, we obtain the following corollary.
\begin{Corollary}
There exists a $G$-regular $1$-factorization of $K_G$ for every infinite group $G$.
\end{Corollary}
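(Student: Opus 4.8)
The plan is to obtain this statement as the special case $n=1$ of Theorem \ref{main}. First I would make explicit the identification of the object at hand: $K_G$, the complete graph on the vertex set $G$, is exactly $K_m[1]$ with $m=|G|$. Indeed, partitioning $G$ into its $|G|$ singletons yields $|G|$ parts of size $1$, and two vertices lie in distinct parts precisely when they are distinct; hence every pair of distinct vertices is an edge, which is the defining property of $K_G$. This is the instance of the general remark from the introduction that $K_m[1]$ is isomorphic to $K_m$. Consequently, a $G$-regular $1$-factorization of $K_G$ is the same datum as a $G$-regular $1$-factorization of $K_{|G|}[1]$.

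Next I would verify the hypothesis of Theorem \ref{main} for these parameters. With $n=1$ and $m=|G|$, the object whose existence is required is a subgroup $H$ of $G$ of order $1$ and index $|G|$. The trivial subgroup $H=\{0\}$ has order $1$ and index $|G:\{0\}|=|G|$, so it meets both conditions, and such a subgroup exists in every group. The ``if'' direction of Theorem \ref{main} then applies verbatim and produces a $G$-regular $1$-factorization of $K_{|G|}[1]=K_G$, for every infinite group $G$, which is precisely the assertion of the corollary.

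There is essentially no obstacle here beyond the bookkeeping: identifying $K_G$ with $K_{|G|}[1]$ and confirming that the trivial subgroup realizes the required order and index. That verification is the only step that needs care, and it is the one I would single out, since the substantive work has already been done upstream in the proof of Theorem \ref{main} (and, further back, in Theorems \ref{main1} and \ref{Gruppi}). It is worth noting that the corollary uses only the easier implication of Theorem \ref{main}, namely that existence of the subgroup yields existence of the factorization; the converse direction plays no role.
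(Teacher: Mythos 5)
Your proposal is correct and matches the paper's approach exactly: the paper derives the corollary by setting $n=1$ in Theorem \ref{main}, with the trivial subgroup $H=\{0\}$ supplying the required subgroup of order $1$ and index $|G|$, and $K_{|G|}[1]\cong K_G$. Your write-up simply makes these routine verifications explicit.
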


\section{$1$-factorizations with subfactorizations}
In this section, given an $H$-regular $1$-factorization 
$\mathcal{H}$ of $K_{m'}[n']$, and a group $G$ containing $H$,
we provide conditions on $G,m,$ and $n$ that guarantee
the existence of a $G$-regular $1$-factorization $\mathcal{G}$ of $K_m[n]$ that contains $\mathcal{H}$ as a subfactorization.
This means that for every pair of $1$-factors $(F,\Gamma)\in \mathcal{H}\times \mathcal{G}$,
either $F\subseteq \Gamma$ or $F\,\cap\, \Gamma$ is empty.
When speaking of an $H$-regular subfactorization of $\mathcal{G}$, it is understood that both $G$ and $H$ act on the related
$1$-factorizations by right translation.

Given two cardinals, $m'$ and $m$, we write $m'|m$ whenever 
$m$ is infinite and $m'\leq m$, or $m$ is finite and $m'$ 
is a divisor of $m$ . In the former case, we set $m/m'=m$. We notice that, similarly to the finite case, we have that $(m/m')\cdot m' = m$.
This convention allows us to consider the case where one parameter between $m$ and $n$ (which define the equipartite complete graph $K_{m}[n]$) is finite.

To ease the notation, given a direct product of groups 
$G=G_1\times H$,  we denote by $G_1$ and $H$
the subgroups $G_1\times \{0\}$ and $\{0\}\times H$ of $G$, respectively. In other words, we consider $G$ as the direct inner product of its two trivially intersecting subgroups 
$G_1$ and $H$.

\begin{Lemma} \label{nested:lemma}
Let $\mathcal{H}$ be an $H$-regular $1$-factorization of $Cay[H:H\setminus K]$, and set $G=G_1\times H$ 
for some group $G_1$.
Then there exists a $G$-regular $1$-factorization of $Cay[G:H\setminus K]$ containining $\mathcal{H}$ as a subfactorization.
\end{Lemma}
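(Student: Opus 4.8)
The plan is to exploit the fact that, since the connection set $H\setminus K$ is contained in the subgroup $H=\{0\}\times H$ of $G=G_1\times H$, the graph $Cay[G:H\setminus K]$ splits into $|G_1|$ pairwise disjoint copies of $\Lambda:=Cay[H:H\setminus K]$. Indeed, two vertices $(a,x)$ and $(b,y)$ are adjacent exactly when $(a-b,x-y)\in H\setminus K$, which forces $a=b$ and $x-y\in H\setminus K$. Hence no edge joins distinct ``layers'' $L_a:=\{a\}\times H$ (for $a\in G_1$), and the subgraph induced on each $L_a$ is isomorphic to $\Lambda$ via $(a,x)\mapsto x$.

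First I would lift each $1$-factor. Given $F\in\mathcal{H}$ (a $1$-factor of $\Lambda$ on the vertex set $H$), define $\widetilde{F}$ to be the subgraph of $Cay[G:H\setminus K]$ whose edges are $\{(a,x),(a,y)\}$ for every $a\in G_1$ and every edge $\{x,y\}$ of $F$; that is, $\widetilde{F}$ places a copy of $F$ in each layer. Since $F$ is a $1$-factor of $\Lambda$, each $\widetilde{F}$ is a $1$-factor of $Cay[G:H\setminus K]$, and because $\mathcal{H}$ partitions the edges of $\Lambda$ within each layer, the collection $\widetilde{\mathcal{H}}:=\{\widetilde{F}\mid F\in\mathcal{H}\}$ is a $1$-factorization of $Cay[G:H\setminus K]$.

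Next I would verify $G$-regularity through the key equivariance identity. Right translation is always sharply transitive on $G$, so it remains only to check that it permutes the factors of $\widetilde{\mathcal{H}}$. Translating $\widetilde{F}$ by $(c,k)\in G_1\times H$ sends the edge $\{(a,x),(a,y)\}$ to $\{(a+c,x+k),(a+c,y+k)\}$; as $a$ runs over $G_1$ so does $a+c$, whence $\widetilde{F}+(c,k)=\widetilde{F+k}$. Since $\mathcal{H}$ is $H$-regular we have $F+k\in\mathcal{H}$, so $\widetilde{F}+(c,k)\in\widetilde{\mathcal{H}}$; thus the right-translation group $G$ is a vertex-regular automorphism group of $\widetilde{\mathcal{H}}$.

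Finally, identifying $\mathcal{H}$ with its image on the layer $L_0=\{0\}\times H$, I would check the subfactorization property: for $F\in\mathcal{H}$ and $\widetilde{F'}\in\widetilde{\mathcal{H}}$, the restriction of $\widetilde{F'}$ to $L_0$ is exactly $F'$, so $F\subseteq\widetilde{F'}$ when $F=F'$ and $F\cap\widetilde{F'}=\emptyset$ otherwise (as $F$ lives entirely in $L_0$). Hence $\widetilde{\mathcal{H}}$ contains $\mathcal{H}$ as a subfactorization. The argument involves no analytically hard step; the only point demanding care is the bookkeeping of the direct-product action, namely the identity $\widetilde{F}+(c,k)=\widetilde{F+k}$, which simultaneously delivers both $G$-regularity and the fact that the layer-$0$ copy reproduces $\mathcal{H}$.
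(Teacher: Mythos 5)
Your construction is exactly the paper's: your $\widetilde{F}$ (a copy of $F$ in each layer $\{a\}\times H$) coincides with the paper's $F^*=\bigcup_{x\in G_1}(F+x)$, and your equivariance identity $\widetilde{F}+(c,k)=\widetilde{F+k}$ is the same computation the paper uses to verify $G$-regularity. The proposal is correct and follows the same route, with the only addition being that you spell out the subfactorization check on the layer $\{0\}\times H$, which the paper leaves implicit.
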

\begin{proof}
Let $\mathcal{H}^*=\{F^*: F\in\mathcal{H}\}$ be the set of $1$-factors of $K_G$ obtained from those in $\mathcal{H}$ as follows:
\[
F^* = \bigcup_{x\in G_1}(F + x).
\]
Clearly, $\mathcal{H}^*$ is a $1$-factorization of $Cay[G:H\setminus K]$. To prove that it is $G$-regular, 
it is enough to check that, for every $F^*\in \mathcal{H}^*$, 
$g\in G_1$ and $h\in H$, $F^*+(g+h)\in \mathcal{H}^*$.
Note that
\[
F^*+g+h=\bigcup_{x\in G_1}(F + x)+(g+h)=
\bigcup_{g'\in G_1}(F + g'+h).
\]
Recalling that $\mathcal{H}$ is regular under the action of $H$ by right translation, we have that 
$F+h=F'\in \mathcal{H}$ which implies
\[
 \bigcup_{g'\in G_1}(F + g'+h)
=\bigcup_{g'\in G_1}(F' + g')=(F')^{*}\in\mathcal{H}^*.
\]
The assertion follows.
\end{proof}


\begin{Theorem}\label{Nested}
Let $\mathcal{H}$ be an $H$-regular $1$-factorization of $K_{m'}[n']$. Also, let $m$ and $n$ be cardinals such that $mn$ is infinite, $m'|m$ and $n'|n$. Then, there exists a regular $1$-factorization of $K_{m}[n]$ containing $\mathcal{H}$ as a subfactorization.
\end{Theorem}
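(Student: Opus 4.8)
The plan is to realize $K_m[n]$ as a Cayley graph on a suitably chosen group $G$ containing $H$, in such a way that the subfactorization $\mathcal{H}$ sits inside naturally, and then invoke the two main tools already established: Lemma \ref{nested:lemma} to lift $\mathcal{H}$ to a partial $G$-regular factorization, and Theorem \ref{main1} (via Theorem \ref{Gruppi}) to complete it to a full $G$-regular $1$-factorization of $K_m[n]$. First I would set $K$ to be the subgroup of $H$ such that $\mathcal{H}$ is an $H$-regular $1$-factorization of $Cay[H:H\setminus K]\cong K_{m'}[n']$; by Theorem \ref{main}, such a $K$ exists with $|K|=n'$ and $|H:K|=m'$. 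The group $G$ I would build as a direct product $G = G_1\times H$, where $G_1$ is chosen so that $|G_1| = m/m'$ (using the convention $m/m'$ defined just before the theorem); this guarantees $|G:H| = m/m'$ and hence, combining with $|H:K|=m'$, that the subgroup $K\leq H\leq G$ has index $|G:K| = (m/m')\cdot m' = m$ in $G$, while $|K|=n'$. To get the correct order $n$ for the parts of $K_m[n]$, I would in fact need a subgroup of $G$ of order $n$, so $G_1$ must also supply the factor $n/n'$: the cleanest choice is $G_1 = A\times B$ with $|A|=m/m'$ and $|B|=n/n'$, and then take the target subgroup to be $L = B\times K\leq G_1\times H = G$, which has $|L| = (n/n')\cdot n' = n$ and index $|G:L| = (m/m')\cdot m' = m$ in $G$.

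With $G$ and $L$ so chosen, the heart of the argument is to produce a $G$-regular $1$-factorization of $Cay[G:G\setminus L]\cong K_m[n]$ that contains $\mathcal{H}$. The mechanism is Lemma \ref{nested:lemma}: applying it with the ambient group written as $G = (A\times B)\times H$ and the relevant factor playing the role of ``$G_1$'' in that lemma, I would lift $\mathcal{H}$ (a $1$-factorization of $Cay[H:H\setminus K]$) to a $G$-regular collection $\mathcal{H}^*$ of $1$-factors whose union is exactly $Cay[G:H\setminus K]$, and in which each lifted factor $F^*$ restricts back to the original $F\in\mathcal{H}$ on each appropriate coset. This realizes $\mathcal{H}$ as a subfactorization: every $F\in\mathcal{H}$ satisfies $F\subseteq F^*$, and the edges of $Cay[G:H\setminus K]$ are precisely the edges of $K_m[n]$ whose difference lies in $H\setminus K$. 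The remaining edges of $K_m[n]$ are those with difference in $(G\setminus L)\setminus(H\setminus K) = G\setminus(L\cup H)$, a connection set disjoint from the one already used.

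It remains to decompose the leftover edges, i.e.\ to build a $G$-regular $1$-factorization of $Cay[G:S']$ with $S' = (G\setminus L)\setminus(H\setminus K)$, and then take the union with $\mathcal{H}^*$. For this I would apply Theorem \ref{main1}: I must verify that $|S'\setminus I(G)|$ is either $0$ or $|G|$. Here Theorem \ref{Gruppi} is the workhorse — it tells us that for the subgroup $L$, the set $G\setminus(I(G)\cup L)$ is empty or has cardinality $|G|$; a short bookkeeping argument then transfers this to $S'$, since $S'$ differs from $G\setminus(I(G)\cup L)$ only by the involution-free, bounded-cardinality adjustment coming from $H\setminus K$, whose contribution is absorbed because $mn$ is infinite and hence $|G|=mn$ is infinite. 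The main obstacle I anticipate is precisely this last cardinality bookkeeping, together with ensuring that the connection sets $H\setminus K$ and $S'$ genuinely partition $G\setminus L$ and that the two $G$-regular factorizations $\mathcal{H}^*$ and the one from Theorem \ref{main1} can be unioned without conflict — but both difficulties are routine once $G$ and $L$ are set up correctly, since the construction guarantees edge-disjointness by matching each factor to a distinct part of the connection set. Finally, I would confirm that $Cay[G:G\setminus L]\cong K_m[n]$, so that the resulting $G$-regular $1$-factorization is of the desired graph and contains $\mathcal{H}$ as a subfactorization, completing the proof.
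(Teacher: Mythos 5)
Your overall architecture coincides with the paper's: realize $K_{m'}[n']$ as $Cay[H:H\setminus K]$, embed $H$ into a direct product $G$ containing a subgroup $L\supseteq K$ of order $n$ and index $m$, lift $\mathcal{H}$ to a $G$-regular $1$-factorization of $Cay[G:H\setminus K]$ via Lemma \ref{nested:lemma}, factorize the complementary Cayley graph $Cay[G:G\setminus(H\cup L)]$ via Theorem \ref{main1}, and take the union. The choice of $G$ and $L$, the identification $Cay[G:G\setminus L]\cong K_m[n]$, and the partition of $G\setminus L$ into the two connection sets are all correct and match the paper.

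The gap is in the one step you yourself flag as the main obstacle: verifying the hypothesis of Theorem \ref{main1} for $S'=G\setminus(H\cup L)$. You propose to apply Theorem \ref{Gruppi} to the subgroup $L$ and then ``absorb'' the discrepancy between $G\setminus(I(G)\cup L)$ and $S'\setminus I(G)$, describing it as a ``bounded-cardinality adjustment coming from $H\setminus K$''. That discrepancy is the set $H\setminus(K\cup I(G))$, whose cardinality can be as large as $|H|=m'n'$, and $m'n'$ can equal $mn=|G|$ (for instance $m'=m$ infinite and $n'=n$, e.g.\ extending a regular $1$-factorization of $K_{\aleph_0}$ inside another one of $K_{\aleph_0}$). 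Removing a set of cardinality $|G|$ from a set of cardinality $|G|$ can a priori leave a nonempty remainder of cardinality strictly less than $|G|$, which is exactly the situation Theorem \ref{main1} cannot handle; so the transfer does not follow from the cardinal arithmetic you invoke. The paper closes this step by a different argument that exploits the product structure: any non-involution of $G\setminus(H\cup L)$ must have a component of order greater than $2$ either in the factor complementary to $H$ or in $H$ itself, and in each case Lemma \ref{cardinality}, applied to that factor alone, yields an explicit product set (e.g.\ $U\times(H\setminus K)$, where $U$ is the set of non-involutions of the complementary factor) of cardinality $|G|$ consisting of non-involutions contained in $G\setminus(H\cup L)$. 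Some argument of this kind, rather than generic bookkeeping against $G\setminus(I(G)\cup L)$, is needed to make your proof complete.
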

\begin{proof}
Let $\mathcal{H}$ be a nonempty $H$-regular $1$-factorization of $K_{m'}[n']$.  
Up to isomorphism, we can assume that $K_{m'}[n'] = Cay[H:H\setminus K]$
where $|H|=m'n'$ and $|K|=n'$, and that $\mathcal{H}$ is $H$-regular 
under the action by right translation, that is, for every $F\in \mathcal{H}$
and $h\in H$, we have that $F+h\in\mathcal{H}$. 

Let $G_1$ and $L_1$ be groups of order $n/n'$ and $m/m'$, respectively.
Also,  set $L=L_1\times K$ and $G = G_1\times L_1\times H$.
Since one between $m$ and $n$ is infinite, then 
\[
|G_1\times L_1|=(m/m')(n/n')=\max(m/m',n/n')=\max(m,n)=mn=|G|. 
\]
Denoting by $U$ the set of non-involutions  of $G_1\times L_1$ and assuming 
that $|U|>0$, by Lemma \ref{cardinality}
we have that $|U|=|G_1\times L_1|=|G|$. Therefore, $U\times (H\setminus K)$
is a set of non-involutions, 
of cardinality $|G|$, contained in $G\setminus(H\, \cup \, L)$.
Similarly, if $U$ denotes the set of non-involutions of $H$ and $|U|>0$, 
then  $((G_1\times L_1)\setminus\{0\}) \times U$ has the same cardinality as $G$, and it contains only elements of order greater than $2$ belonging to 
$G\setminus(H\,\cup\, L)$.
Therefore, 
if $G\setminus (H\,\cup\, L)$ contains some non-involutions, then the number of its elements of order greater than $2$ is $|G|$. Hence, by Theorem \ref{main1}, there is a $G$-regular $1$-factorization $\mathcal{F}_1$ of $Cay[G:G\setminus (H\,\cup\, L)]$.
Moreover, due to Lemma \ref{nested:lemma}, there also exists a $G$-regular $1$-factorization $\mathcal{F}_2$ of $Cay[G:H\setminus  K]$. Considering that $G\setminus( H\,\cup\, L)$ and $H\setminus  L =  H\setminus K$ partition $G\setminus L$, it follows that $\mathcal{F}_1\,\cup\,\mathcal{F}_2$ is a $G$-regular $1$-factorization of $Cay[G:G\setminus L]=K_{m}[n]$ containing 
$\mathcal{H}$ as a subfactorization.
\end{proof}

As a corollary, we obtain the following.

\begin{Corollary}
Let $\mathcal{H}$ be a regular $1$-factorization of $K_{m'}$. Then, given an infinite cardinal $m$, there exists a regular $1$-factorization of $K_{m}$ that admits $\mathcal{H}$ as a subfactorization if and only if $m'|m$.
\end{Corollary}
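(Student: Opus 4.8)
The plan is to prove the two implications separately: the sufficiency of $m'\mid m$ will follow by specializing Theorem \ref{Nested}, while its necessity will follow from a cardinality comparison forced by the subfactorization relation.

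For the \emph{if} direction, assume $m'\mid m$. Since $K_{m'}=K_{m'}[1]$ and $K_m=K_m[1]$, I would invoke Theorem \ref{Nested} with $n'=n=1$. Its three hypotheses all hold: $mn=m$ is infinite because $m$ is; $m'\mid m$ is the standing assumption; and $n'\mid n$ reads $1\mid 1$, which is true since $1$ is finite and divides $1$. Theorem \ref{Nested} then yields a regular $1$-factorization of $K_m[1]=K_m$ that contains $\mathcal{H}$ as a subfactorization, which is exactly what is claimed.

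For the \emph{only if} direction, suppose some regular $1$-factorization $\mathcal{G}$ of $K_m$ admits $\mathcal{H}$ as a subfactorization. By the definition of subfactorization, each $1$-factor $F\in\mathcal{H}$ is contained in some $1$-factor $\Gamma\in\mathcal{G}$; as $F$ is a spanning $1$-regular subgraph of $K_{m'}$, every vertex of $\mathcal{H}$ occurs as a vertex of $\Gamma$, so $V(\mathcal{H})\subseteq V(\mathcal{G})$. Comparing cardinalities gives $m'=|V(\mathcal{H})|\leq|V(\mathcal{G})|=m$, and since $m$ is infinite the inequality $m'\leq m$ is precisely the relation $m'\mid m$. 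The only delicate point in this argument is the passage from containment of $1$-factors to the vertex inclusion $V(\mathcal{H})\subseteq V(\mathcal{G})$, and this is exactly where the spanning property of a $1$-factor is used. Overall I expect no genuine obstacle here, since the sufficiency is a clean instance of the already-established Theorem \ref{Nested} and the necessity reduces to counting vertices.
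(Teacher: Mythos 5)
Your proposal is correct and follows the paper's intended route: the sufficiency is exactly the specialization of Theorem \ref{Nested} to $n'=n=1$, and the necessity is the obvious vertex count (made explicit via the observation that a nonempty $1$-factor of $\mathcal{H}$ must meet, hence lie inside, some spanning $1$-factor of $K_m$), which the paper leaves unstated. No gaps.
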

%

\section*{Acknowledgements}
The authors were partially supported by INdAM--GNSAGA.

\end{document}